\newtheorem{thm}{Theorem}[section]
\newtheorem{lem}[thm]{Lemma}
\newtheorem{cor}[thm]{Corollary}
\theoremstyle{definition}
\renewcommand{\Re}{\mathbb R}
\newcommand{\Ze}{\mathbb Z}
\newcommand{\Zep}{\Ze^{\geq 0}}
\renewcommand{\epsilon}{\varepsilon}
\newcommand{\Red}{\Re^d}
\newcommand{\FF}{\mathcal F}
\newcommand{\st}{\; : \; }
\renewcommand{\phi}{\varphi}
\newcommand{\vol}[1]{\operatorname{vol}\left(#1\right)}
\newcommand{\card}[1]{\left|#1\right|}
\newcommand{\texp}[2][\lambda]{
{^{#2}{#1}}
}
\newcommand{\remark}[1]{}
\newcommand{\myconst}{3.153}
\title{Approximating set multi-covers}
\author[M. Nasz\'odi, A. Polyanskii]
{M\'arton Nasz\'odi \and Alexandr 
Polyanskii}
\address[M. Nasz\'odi]{
ELTE, Dept. of Geometry,
Lorand E\"otv\"os University,
P\'azm\'any P\'eter S\'et\'any 1/C
Budapest, Hungary 1117
}
\address[A. Polyanskii]{
Moscow Institute of Physics and Technology, Technion, Institute for 
Information Transmission Problems RAS.
}
\email{marton.naszodi@math.elte.hu \and alexander.polyanskii@yandex.ru}
\keywords{transversal, covering, Rogers' bound, multiple transversal}
\subjclass[2010]{05D15, 52C17}
\thanks{M. Nasz\'odi thanks the following agencies for their support: 
the Swiss National Science Foundation grants no. 200020-162884 and 
200021-175977;
the J\'anos Bolyai Research Scholarship of the Hungarian Academy of Sciences; 
and the National Research, Development and Innovation Office, NKFIH Grants 
PD-104744 and K119670.
}
\thanks{A. Polyanskii was partially supported by the Russian Foundation for 
Basic 
Research, grants 15-31-20403 (mol\_a\_ved), 
15-01-99563 A, 15-01-03530 A}
\begin{document}
\begin{abstract}
Johnson and Lov\'asz and Stein proved independently that any hypergraph 
satisfies $\tau\leq (1+\ln \Delta)\tau^{\ast}$, where $\tau$ is the transversal 
number, $\tau^{\ast}$ is its fractional version, and $\Delta$ denotes the 
maximum degree. We prove $\tau_f\leq \myconst\tau^{\ast}\max\{\ln \Delta, f\}$ 
for 
the $f$-fold transversal number $\tau_f$.
Similarly to Johnson, Lov\'asz and Stein, we also show that this bound can be 
achieved non-probabilistically, using a greedy algorithm. 

As a combinatorial application, we prove an estimate on how fast $\tau_f/f$ 
converges to $\tau^{\ast}$. As a geometric application, we obtain an upper 
bound on the minimal density of an $f$-fold covering of the $d$-dimensional 
Euclidean 
space by translates of any convex body. 
\end{abstract}

\maketitle

\section{Introduction and Preliminaries}

A \emph{hypergraph} is a pair $(X,\FF)$, where $X$ is a finite set and 
$\FF\subseteq 2^X$ is a family of some subsets of $X$. 
We call the elements of $X$ \emph{vertices}, and the members of $\FF$ 
\emph{edges} of the hypergraph.
When a vertex is contained in an edge, we may say that 'the vertex covers the 
edge', or that 'the edge covers the vertex'.

Let $f$ be a positive integer.
An \emph{$f$-fold transversal} of $(X,\FF)$ is a multiset $A$ of $X$ such that 
each member of $\FF$ contains at least $f$ elements (with multiplicity).
The \emph{$f$-fold transversal number} $\tau_f$ of $(X,\FF)$ is the minimum 
cardinality (with multiplicity) of an $f$-fold transversal. 
A 1-transversal is called a transversal, and the 1-transversal number is called 
the transversal number, and is denoted by $\tau=\tau_1$. 

A \emph{fractional transversal} is a function $w:X\longrightarrow 
[0,1]$ with $\sum_{x:x\in F} w(x)\geq 1$ for all $F\in\FF$. The 
\emph{fractional transversal number} of $(X,\FF)$ is 
\begin{equation*}
\tau^\ast= \tau^{\ast}(\FF):=\inf\left\{ \sum_{x:x\in X} w(x) \st w \mbox{ is a 
fractional transversal} \right\}.
\end{equation*}

Clearly, $\tau^{\ast}\leq\tau$. In the opposite direction, 
Johnson \cite{Jo74}, Lov\'asz \cite{Lo75} and Stein \cite{St74} independently 
proved that 
\begin{equation}\label{eq:lovasz}
\tau\leq
(1+\ln \Delta)\tau^{\ast}, 
\end{equation}
where $\Delta$ denotes the \emph{maximum degree} of $(X,\FF)$, that is, the 
maximum number of edges a vertex is contained in. They showed that the 
greedy algorithm, that is, picking vertices of $X$ one by one, in such a way 
that we always pick one that is contained in the largest number of uncovered 
edges, yields a transversal set whose cardinality does not exceed the right 
hand side in \eqref{eq:lovasz}. For more background, see Füredi's survey 
\cite{Fu88}.

Our main result is an extension of this theorem to $f$-fold transversals.

\begin{thm}\label{thm:ftransversal}
Let $\lambda\in(0,1)$ and let $f$ be a positive integer. Then, with 
the above notation, 
\begin{equation}\label{eq:ftransversalthm}
\tau_f\leq \frac{1-\lambda^f}{1-\lambda}\tau^{\ast} (1+\ln \Delta -(f-1)\ln 
\lambda),
\end{equation}
moreover, for rational $\lambda$, the greedy algorithm using appropriate 
weights, yields an $f$-fold transversal of cardinality not exceeding the right 
hand side of \eqref{eq:ftransversalthm}.
\end{thm}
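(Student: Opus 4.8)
The plan is to establish \eqref{eq:ftransversalthm} through a \emph{weighted greedy algorithm} generalising the Johnson--Lov\'asz--Stein argument, with a weighting tuned so that the constant collapses exactly to $\frac{1-\lambda^f}{1-\lambda}$ (which will simultaneously give the algorithmic statement). For $\ell=1,\dots,f$ I would set $\mu_\ell:=\lambda^{\ell-1}$ and, for $0\le j\le f$, $c_j:=\mu_{j+1}+\dots+\mu_f=\lambda^j\frac{1-\lambda^{f-j}}{1-\lambda}$, so $c_f=0$, $c_{f-1}=\lambda^{f-1}=\mu_f$ and $c_0=\frac{1-\lambda^f}{1-\lambda}$. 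During a run, let $j(F)$ be the current multiplicity (capped at $f$) of an edge $F$ in the chosen multiset, give $F$ the momentary weight $c_{j(F)}$, and put $\Phi:=\sum_{F\in\FF}c_{j(F)}$, so $\Phi=c_0\card{\FF}$ at the start; the greedy step adds a vertex $x$ maximising the resulting decrease $\sum_{F\ni x,\,j(F)<f}\bigl(c_{j(F)}-c_{j(F)+1}\bigr)=\sum_{F\ni x,\,j(F)<f}\mu_{j(F)+1}$ of $\Phi$, and we stop once $\Phi=0$, i.e.\ once every edge is $f$-covered. I would record three facts. \textbf{(i)} Each step decreases $\Phi$ by at least $\mu_f=\lambda^{f-1}$: if $\Phi>0$, some edge is incomplete hence nonempty, so the chosen $x$ lies in an incomplete edge, whose weight $\mu_{j+1}\ge\mu_f$ drops. \textbf{(ii)} Fix an optimal fractional transversal $w$, $\sum_x w(x)=\tau^{\ast}$; testing $\sum_{x\in F}w(x)\ge1$ against $\mu_{j(F)+1}$ over all incomplete $F$ and averaging shows the best $x$ decreases $\Phi$ by at least $\Phi/(c_0\tau^{\ast})$ --- the only estimate used being the edgewise $\mu_{j+1}/c_j=\frac{1-\lambda}{1-\lambda^{f-j}}\ge\frac1{c_0}$. \textbf{(iii)} Testing $\sum_{x\in F}w(x)\ge1$ against every edge and summing gives $\card{\FF}\le\Delta\tau^{\ast}$.

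For the count I would use a charging (amortisation) argument rather than a crude two-phase estimate. There are exactly $f\card{\FF}$ ``cover-events'', the $\ell$-th event of $F$ being ``$F$ reaches multiplicity $\ell$'', each occurring at a unique step; set $\mu(E):=\mu_\ell$ for a level-$\ell$ event. A step triggering the event set $\mathcal E$ decreases $\Phi$ by $g=\sum_{E\in\mathcal E}\mu(E)$; charge each $E\in\mathcal E$ the amount $\mu(E)/g$, so each step is charged exactly $1$ and the total charge equals the number of steps, which bounds $\tau_f$. List the events $E_1,E_2,\dots$ in order of occurrence and put $W_r:=\sum_{r'\ge r}\mu(E_{r'})$, so $W_1=c_0\card{\FF}$. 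Just before the step triggering $E_r$, the not-yet-triggered events include $E_r,E_{r+1},\dots$, and since each incomplete edge contributes the sum of the weights of its pending events --- exactly its current weight --- the potential there is $\ge W_r$; so by \textbf{(i)}--\textbf{(ii)} the decrease $g_r$ at that step satisfies $g_r\ge\max\bigl(\mu_f,\,W_r/(c_0\tau^{\ast})\bigr)$. Since $\mu(E_r)=W_r-W_{r+1}$, the intervals $[W_{r+1},W_r]$ tile $[0,W_1]$, and $t\mapsto\min\bigl(1/\mu_f,\,c_0\tau^{\ast}/t\bigr)$ is decreasing,
\[
\tau_f\ \le\ \sum_{r\ge1}\frac{\mu(E_r)}{g_r}\ \le\ \sum_{r\ge1}\bigl(W_r-W_{r+1}\bigr)\min\!\Bigl(\tfrac1{\mu_f},\,\tfrac{c_0\tau^{\ast}}{W_r}\Bigr)\ \le\ \int_0^{c_0\card{\FF}}\min\!\Bigl(\tfrac1{\mu_f},\,\tfrac{c_0\tau^{\ast}}{t}\Bigr)\,\di t .
\]
The integrand is $1/\mu_f$ for $t\le c_0\tau^{\ast}\mu_f$ and $c_0\tau^{\ast}/t$ afterwards, and the break point lies in range since $\tau^{\ast}\le\card{\FF}$; hence the integral equals $c_0\tau^{\ast}+c_0\tau^{\ast}\ln\frac{\card{\FF}}{\tau^{\ast}\mu_f}=c_0\tau^{\ast}\bigl(1+\ln\frac{\card{\FF}}{\tau^{\ast}\mu_f}\bigr)$. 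Substituting $\card{\FF}\le\Delta\tau^{\ast}$ from \textbf{(iii)} and $\mu_f=\lambda^{f-1}$ yields $\tau_f\le\frac{1-\lambda^f}{1-\lambda}\tau^{\ast}\bigl(1+\ln\Delta-(f-1)\ln\lambda\bigr)$, which is \eqref{eq:ftransversalthm}. The argument is constructive and, for rational $\lambda$, the $\mu_\ell$ are rational, so the greedy rule is explicit; this gives the algorithmic claim. (If $\emptyset\in\FF$ then $\tau^{\ast}=\infty$ and there is nothing to prove; $\card{\FF}=0$ is trivial.)

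The real content is the choice $\mu_\ell=\lambda^{\ell-1}$: it is exactly the weighting for which the edgewise ratio $\mu_{j+1}/c_j$ is minimised over $j$ by the total reciprocal weight $1/c_0=\frac{1-\lambda}{1-\lambda^f}$ (making \textbf{(ii)} clean), while at the same time $c_0=\frac{1-\lambda^f}{1-\lambda}$ and $\mu_f=\lambda^{f-1}$ are precisely the quantities that make the split integral reassemble into the stated bound; a uniform weighting, or a coarser greedy analysis, misses the sharp constant. The secondary point of care is to use the charging accounting rather than a two-phase count, so that the ``$+1$'' ends up inside the parentheses rather than as an extra additive term.
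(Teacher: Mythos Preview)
Your proof is correct and follows essentially the same route as the paper: the identical weighted greedy algorithm with payments $\lambda^{\ell-1}$ and the same potential $\Phi=\sum_F c_{j(F)}$ (the paper's ``total remaining value'' $v(k)$). The only differences are in packaging --- the paper states your fact~(ii) in LP-dual form as a fractional-matching lemma ($v(\ell)/z\le c_0\,\nu^{\ast}$ whenever every vertex has value at most $z$) and replaces your charging/integral count by a discretised Abel summation over the possible values, reading off $\Delta$ directly from the initial maximum value rather than via your intermediate bound $|\FF|\le\Delta\,\tau^{\ast}$.
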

Substituting $\lambda=0.287643$ (which is a bit less than $1/e$), we obtain
\begin{cor}
With the above notation, we have
\begin{equation}\label{eq:ftransversalcor}
\tau_f\leq
 \myconst\tau^{\ast}\max\{\ln \Delta, f\}.
\end{equation}
\end{cor}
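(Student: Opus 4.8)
The corollary is an immediate consequence of Theorem~\ref{thm:ftransversal}, obtained by specializing $\lambda$; the plan is simply to carry out that specialization and check the arithmetic. First I would apply \eqref{eq:ftransversalthm} with $\lambda=0.287643$. Since $\lambda\in(0,1)$ we have $1-\lambda^{f}\le 1$, so the first factor satisfies $\tfrac{1-\lambda^{f}}{1-\lambda}\le\tfrac{1}{1-\lambda}$, and it remains to bound the bracket $1+\ln\Delta-(f-1)\ln\lambda$ by a constant multiple of $M:=\max\{\ln\Delta,f\}$. (We may assume $\ln\Delta\ge 0$, the hypergraph being degenerate otherwise, and $f$ is a positive integer, so $M\ge f\ge 1$ and $M\ge\ln\Delta$.)

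For this bound, write $\mu:=-\ln\lambda>0$, so the bracket equals $1+\ln\Delta+(f-1)\mu$. The one point that needs a little care is to retain the ``$-1$'' in the term $(f-1)\mu$: using $f-1\le M-1$, $\ln\Delta\le M$ and $\mu>0$,
\[
1+\ln\Delta+(f-1)\mu\;\le\;1+M+(M-1)\mu\;=\;(1+\ln\lambda)+(1-\ln\lambda)\,M .
\]
Now $0.287643<1/e$, hence $1+\ln\lambda<0$, and this first summand may be dropped, leaving the bracket $\le(1-\ln\lambda)M$. This is really the crux of the computation: had one instead used the wasteful estimate $f-1\le M$, the surviving additive constant would not cancel and one would only obtain the worse factor $\tfrac{2-\ln\lambda}{1-\lambda}\approx 4.56$; keeping $\lambda$ strictly below $1/e$ is exactly what makes the leftover constant negative and hence discardable.

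Combining the two estimates yields
\[
\tau_{f}\;\le\;\frac{1-\ln\lambda}{1-\lambda}\,\tau^{\ast}\max\{\ln\Delta,f\},
\]
so all that is left is the numerical check $\tfrac{1-\ln\lambda}{1-\lambda}\le\myconst$ for $\lambda=0.287643$, which one verifies from $1-\lambda=0.712357$ together with a sufficiently sharp estimate of $\ln\lambda$ (e.g.\ $e^{-1.246062}<0.287643$, giving $1-\ln\lambda<2.246062$ and hence $\tfrac{1-\ln\lambda}{1-\lambda}<\myconst$). I do not foresee any genuine difficulty here: the whole substance lies in Theorem~\ref{thm:ftransversal}, and the role of the corollary is only to repackage that bound in a $\lambda$-free form, the value $0.287643$ being reverse-engineered so that the resulting constant lands just below $\myconst$ while respecting the constraint $\lambda<1/e$.
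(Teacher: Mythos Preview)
Your proposal is correct and matches the paper's approach exactly: the paper itself gives no argument beyond the one-line remark ``Substituting $\lambda=0.287643$ (which is a bit less than $1/e$), we obtain'', and what you have written is precisely the arithmetic verification of that substitution, including the key observation that $\lambda<1/e$ makes the leftover additive term $1+\ln\lambda$ negative. The only quibble is that your intermediate numerical bound $1-\ln\lambda<2.246062$ is a hair too loose (since $3.153\times 0.712357\approx 2.2460616$); using the slightly sharper estimate $-\ln(0.287643)<1.24605$ (equivalently $e^{-1.24605}<0.287643$) closes the chain cleanly and yields $\tfrac{1-\ln\lambda}{1-\lambda}\approx 3.1530<3.153$.
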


This result may be interpreted in two ways. First, it gives an algorithm that 
approximates the integer programming (IP) problem of finding $\tau_f$, with a 
better 
bound on the output of the algorithm than the obvious estimate $\tau_f\leq 
f\tau\leq f\tau^* (1+\ln \Delta)$.

A similar result was obtained by Rajagopalan and Vazirani in
\cite{RV98} (an improvement of \cite{Do82}), where, the (multi)-set 
(multi)-cover problem is considered, that is, the goal is to cover vertices by 
sets. This is simply the combinatorial dual (and therefore, equivalent) 
formulation of our problem. In 
\cite{RV98}, each set can be chosen at most once. They present generalizations 
of the greedy algorithm of \cite{Jo74}, \cite{Lo75} and \cite{St74}, and prove 
that it finds an approximation of the (multi)-set (multi)-cover problem within 
an $\ln \Delta$ factor of the optimal solution of the corresponding linear 
programming (LP) problem. Moreover, they give parallelized versions of the 
algorithms.

The main difference between \cite{RV98} and the present paper is that there, 
the optimal solution of an IP problem is compared to the optimal solution of 
the LP-relaxation of the same IP problem, whereas here, we compare $\tau_f$ 
with $\tau^*$, where the latter is the optimal solution of a weaker LP problem: 
the problem with $f=1$.

We note that, using the fact that $f\tau^{\ast}\leq\tau_f$, 
\eqref{eq:ftransversalcor} also implies that the \emph{performance ratio} (that 
is, the ratio of the value obtained by the algorithm to the optimal value, in 
the worst case) of our algorithm is constant when $\ln\Delta\leq f$. Compare 
this with \cite{BDS04}*{Lemma~1 in Section~3.1}, where it is shown that, even 
for large $f$, the standard greedy algorithm yields a performance ratio of 
$\Omega(\ln m)$, where $m$ is the number of sets in the hypergraph. Further 
recent results on the performance ratio of another modified greedy algorithm 
for variants of the set cover problem can be found in \cite{FK06}. See also 
Chapter~2 of the book \cite{Va01} by Vazirani. 

The second interpretation of our result is the following. It is easy to see 
that $\frac{\tau_f}{f}$ converges to $\tau^{\ast}$ as $f$ tends to infinity. 
Now, \eqref{eq:ftransversalcor} quantifies the speed of this convergence in 
some sense. In particular, it yields that for $f=\ln \Delta$ we have 
$\frac{\tau_f}{f}\leq \myconst\tau^{\ast}$. We have better approximation for 
larger $f$.

\begin{cor}\label{cor:onexistance_of_good_fractional_transvesal}
For every $0<\varepsilon\leq 1$, if we set 
$f:=\left\lceil\frac{2(1+\ln\Delta)}{\varepsilon(1-\lambda)}\right\rceil$, where
$0<\lambda<1$ is such that 
$-\ln \lambda/(1-\lambda)\leq 1+\varepsilon /2$, then the $f$-fold transversal 
constructed in Theorem~\ref{thm:ftransversal} yields a fractional transversal 
which gives
\[
\tau^{\ast}\leq \frac{\tau_f}{f}
\leq \tau^{\ast}(1+\varepsilon).
\]
\end{cor}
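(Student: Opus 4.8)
The plan is to treat the two inequalities separately. The left one, $\tau^{\ast}\le\tau_f/f$, is the standard easy direction already alluded to in the text: taking a minimum $f$-fold transversal $A$ and truncating every multiplicity to at most $f$ (which preserves the $f$-fold transversal property and does not increase the cardinality), the function $w(x):=m_A(x)/f$, where $m_A(x)$ is the multiplicity of $x$ in $A$, is a fractional transversal with $\sum_{x\in X}w(x)=|A|/f=\tau_f/f$; hence $\tau^{\ast}\le\tau_f/f$.

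For the right inequality I would start from Theorem~\ref{thm:ftransversal} with the prescribed $\lambda$ (chosen rational if one wants the constructive version — \eqref{eq:ftransversalthm} itself is valid for every $\lambda\in(0,1)$), divide \eqref{eq:ftransversalthm} by $f$, and then apply two deliberately crude relaxations that are both in the safe direction: $1-\lambda^f\le 1$, and $-(f-1)\ln\lambda\le -f\ln\lambda$ (using $\ln\lambda<0$). These reduce the task to showing
\[
\frac{1+\ln\Delta-f\ln\lambda}{f(1-\lambda)}=\frac{1+\ln\Delta}{f(1-\lambda)}+\frac{-\ln\lambda}{1-\lambda}\le 1+\varepsilon .
\]
The second summand is $\le 1+\varepsilon/2$ by the hypothesis imposed on $\lambda$, and the first is $\le\varepsilon/2$ exactly when $f\ge\frac{2(1+\ln\Delta)}{\varepsilon(1-\lambda)}$, which is forced by the choice $f=\lceil\frac{2(1+\ln\Delta)}{\varepsilon(1-\lambda)}\rceil$; adding the two bounds gives $\tau_f/f\le\tau^{\ast}(1+\varepsilon)$.

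I would also open the argument by noting that a $\lambda$ satisfying the stated requirement exists for every $\varepsilon\in(0,1]$: the map $\lambda\mapsto -\ln\lambda/(1-\lambda)$ decreases continuously from $+\infty$ to $1$ on $(0,1)$ (substitute $t=1-\lambda$ and expand $-\ln(1-t)/t=1+t/2+t^2/3+\cdots$), so any $\lambda$ close enough to $1$ (and rational, if needed) works. I do not expect a genuine obstacle here; the whole proof is a short chain of estimates. The only point requiring a little care is bookkeeping — checking that the two relaxations are harmless and that the split $\varepsilon=\varepsilon/2+\varepsilon/2$ matches the two separate conditions, one on $f$ and one on $\lambda$.
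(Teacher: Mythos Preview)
Your proof is correct and follows essentially the same approach as the paper: the paper also constructs the fractional transversal $w(x)=(\text{multiplicity of }x)/f$, drops the factor $1-\lambda^{f}\le 1$, and bounds $\frac{1+\ln\Delta-(f-1)\ln\lambda}{f(1-\lambda)}\le 1+\varepsilon$ via exactly the split $\frac{1+\ln\Delta}{f(1-\lambda)}\le\varepsilon/2$ and $\frac{(f-1)}{f}\cdot\frac{-\ln\lambda}{1-\lambda}\le 1+\varepsilon/2$ that you carry out. Your added remarks (the truncation of multiplicities and the existence of a suitable $\lambda$) are harmless extra care that the paper leaves implicit.
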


We prove Theorem~\ref{thm:ftransversal} and 
Corollary~\ref{cor:onexistance_of_good_fractional_transvesal} in 
Section~\ref{sec:proofmain}, where, at the end, we discuss the running time of 
our algorithm.

\subsection{A geometric application}
Next, we turn to a classical geometric covering problem.
Rogers \cite{Ro57} showed that for any convex body $K$ in $\Red$, there is a 
covering of $\Red$ with translates of $K$ of density at most
\begin{equation}\label{eq:rogers}
d\ln{d}+d\ln\ln d+5d.
\end{equation}
For the definition of density cf. \cite{PaAg95}.
G. Fejes T\'oth \cites{FTG76,FTG79} gave the non-trivial lower bound 
$c_d f$ for the density of an $f$-fold covering of $\Red$ by Euclidean unit 
balls (with some $c_d>1$). For more information on multiple coverings in 
geometry, see the survey \cite{FTGhandbook}.
As an application of Theorem~\ref{thm:ftransversal}, we give a similar estimate 
for $f$-fold coverings.

\begin{thm}\label{thm:multiplecovspace}
Let $K\subseteq \Red$ be a convex body and $f\geq 1$ an integer. Then there is 
an arrangement of translates of $K$ with density at most 
\[
(1+o(1)) \cdot 3.153\max\left\{
d\ln d,
f
\right\},
\]
where every point of $\Red$ is covered at least $f$ times.
\end{thm}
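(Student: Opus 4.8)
The strategy is the standard reduction from covering Euclidean space by translates of a convex body to a multi-cover problem in an appropriately chosen (infinite) hypergraph, combined with a limiting argument to pass from finite sub-hypergraphs to all of $\Red$. The key point is that Theorem~\ref{thm:ftransversal}, applied via its corollary \eqref{eq:ftransversalcor}, gives a density bound governed by $\max\{\ln\Delta, f\}$ rather than by $f\ln\Delta$.

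Let me sketch the steps:

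The plan is to proceed as follows. First I would set up the hypergraph whose vertices are the translation vectors and whose edges encode the covering condition: for the dual (set-cover) formulation, one wants a family of translates $\{K + t\}$ such that every point of $\Red$ lies in at least $f$ of them. The standard trick (as in the Erdős–Rogers / Rogers argument, and as used for the Rogers bound $d\ln d + d\ln\ln d + 5d$) is to first reduce to covering a large torus $\Red/L\Ze^d$, or equivalently to covering a fundamental domain periodically, so that everything becomes finite. On this finite quotient, build a hypergraph $(X,\FF)$: the ground set $X$ is a fine $\epsilon$-net of translation vectors, and for each point $p$ in the domain, put an edge $F_p = \{t \in X : p \in K + t\}$, i.e.\ $t \in K - p$ up to the net. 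An $f$-fold transversal of this hypergraph is exactly (a discretization of) an $f$-fold covering.

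Second, I would estimate the relevant parameters. The maximum degree $\Delta$ is, up to the discretization, proportional to $\vol{K}$ over the mesh volume — so $\ln \Delta = (1+o(1)) d \ln d$ after optimizing the lattice/net as in Rogers' construction (this is precisely where the $d\ln d$ comes from; one uses the near-optimal lattice covering giving $\Theta(d\ln d)$, or more carefully one mimics Rogers to get the constant right in the $(1+o(1))$). The fractional transversal number $\tau^\ast$ of this hypergraph corresponds to the trivial fractional covering of density $1$ (assign each translate weight equal to the reciprocal of how many times $K$ covers a generic point in the uniform tiling by $K+t$ as $t$ ranges over a continuum) — so $\tau^\ast$ translates to density $(1+o(1))$. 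Then \eqref{eq:ftransversalcor} gives an $f$-fold transversal of size at most $3.153\,\tau^\ast\max\{\ln\Delta,f\}$, which translates back to density at most $(1+o(1))\cdot 3.153 \max\{d\ln d, f\}$.

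Third, the limiting argument: take the period $L\to\infty$ and the net mesh $\epsilon\to 0$; a standard compactness / averaging argument (choosing a convergent subsequence of the periodic arrangements, or directly a probabilistic shift) produces an $f$-fold covering of all of $\Red$ with the claimed density, the error terms being absorbed into the $o(1)$.

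The main obstacle I expect is bookkeeping the discretization and periodization so that the $o(1)$ error terms genuinely go to zero uniformly and do not interact badly with the constant $3.153$ — in particular, ensuring that the passage from "covering a point" to "covering a net point" does not lose more than a $(1+o(1))$ factor, which typically requires covering slightly inflated copies $(1+\delta)K$ and then rescaling, and making $\Delta$ for the discretized problem match $\vol{K}/\text{mesh}$ closely enough that $\ln\Delta = (1+o(1))d\ln d$. A secondary subtlety is extracting the correct constant in $d\ln d$: one should invoke the known existence of lattice coverings of $\Red$ by translates of $K$ with density $O(d\ln d)$ (Rogers), so that $\Delta$ can be taken with $\ln \Delta \le (1+o(1)) d\ln d$; this keeps the leading term sharp.
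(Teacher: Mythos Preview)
Your overall architecture --- periodize to a large cube, discretize, apply \eqref{eq:ftransversalcor}, then pass to the limit --- is the same as the paper's. But there is a real gap at the step where you claim $\ln\Delta=(1+o(1))d\ln d$. You propose to ``invoke the known existence of lattice coverings of $\Red$ by translates of $K$ with density $O(d\ln d)$ (Rogers), so that $\Delta$ can be taken with $\ln\Delta\le(1+o(1))d\ln d$.'' This is circular: for $f=1$ the statement you are proving \emph{is} Rogers' bound, so it cannot serve as an input; and in any case the covering density bounds the \emph{average} multiplicity, not the maximum degree $\Delta$, so it would not deliver the needed control anyway.

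The point you are missing is that $\ln\Delta\le(1+o(1))d\ln d$ comes from an elementary choice of the discretization scale, not from any prior covering theorem. In the paper one discretizes the \emph{points to be covered} (the edge side), taking $\Lambda$ to be the centres of a saturated packing of $\tfrac{\delta}{2}K$ inside the cube; then $\Lambda+\delta K$ covers the cube, one covers $\Lambda$ by translates of $(1-\delta)K=K\sim\delta K$, and a pure volume count gives $\Delta\le(2/\delta)^d$. Choosing $\delta=2/(d\ln d)$ yields $\ln\Delta\le d\ln(d\ln d)=(1+o(1))d\ln d$ while $(1-\delta)^{-d}=1+o(1)$; this balance is the heart of Rogers' argument and is precisely what your sketch never identifies. (A minor further difference: the paper discretizes the edge side and leaves the vertex set essentially continuous, using the Minkowski-difference trick so that an $f$-cover of the finite set $\Lambda$ by $(1-\delta)K$ lifts exactly to an $f$-cover of the whole cube by $K$; your scheme of netting the translation vectors and inflating $K$ can also be made to work, but is less clean.)
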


The key in proving Theorem~\ref{thm:multiplecovspace} is a general statement, 
Theorem~\ref{thm:multiplecovgeneral}, presented in 
Section~\ref{sec:geometry}. Both theorems are proved the same way as 
corresponding 
results in \cite{N15}, where the case $f=1$ is considered.

Earlier versions of Theorems~\ref{thm:multiplecovspace} and 
\ref{thm:multiplecovgeneral} were proved in \cite{FNN}. There, in place of 
the main result of the present paper, a probabilistic argument is used which 
yields quantitatively weaker bounds. The quantitative gain here comes from the 
fact that in the probabilistic bound on $\tau_f$ presented in \cite{FNN}, one 
has the size of the edge set $\FF$ as opposed to the maximum degree $\Delta$, 
which is what we have in \eqref{eq:ftransversalcor}.

\section{Proof of Theorem~\ref{thm:ftransversal}}\label{sec:proofmain}

\subsection{The algorithm.}
First, we imagine that each member of $\FF$ has $f$ bank notes, the 
denominations are $\$1,\$\lambda,\ldots,\$\lambda^{f-1}$, where $\lambda<1$ is 
fixed. We pick vertices one by one. At each step, we pick a vertex, 
and each edge that contains it pays the largest bank note that it has. So, each 
edge pays $\$1$ for the first vertex selected from it, then $\$\lambda$ for the 
second, etc., and finally, $\$\lambda^{f-1}$ for the $f$-th vertex that it 
contains. Later on, it does not pay for any additional selected vertex that it 
contains. Now, we follow the greedy algorithm: at each step, we pick the vertex 
that yields the largest payout at that step. We finish once each edge is 
covered at least $f$ times, that is, when we collected all the money.

\subsection{Notation}
Given a positive integer $f$, we define the \emph{truncated exponential 
function} denoted by $\texp{k}$ as follows: for any $\lambda>0$, and any $0\leq 
k<f$, let $\texp{k}=\lambda^k$, and let $\texp{k}=0$ for any $k\geq f$.
Note that the value of $f$ is implicitly present in any formula involving the 
truncated exponential function.

For each $F\in\FF$, let $k(F)$ denote the number of chosen vertices 
(with multiplicity) contained in $F$. We call the function $k:\FF\to\Zep$ the 
\emph{current state}, where $\Zep$ is the set of non-negative integers. At the 
start, $k$ is identically zero.

Given a function $k:\FF\to\Zep$,
we define the \emph{value of a vertex} $x\in X$ with respect to $k$ as
\begin{equation*}
 v_k(x):=\sum_{F:x\in F\in 
\FF} \texp{k(F)}.
\end{equation*}
The \emph{total remaining value} of $k$ is defined as
\begin{equation*}
 v(k):=\sum_{F:F\in \FF}{\mathop\sum\limits_{i=k(F)}^{f} 
\texp{i}},
\end{equation*}
which is the total pay out that will be earned in the subsequent steps.

\subsection{Fractional matchings}
A \emph{fractional matching} of the hypergraph $(X,\FF)$  is a function 
$w:\FF\longrightarrow [0,1]$ with $\sum_{F:x\in F\in\FF} w(F)\leq 1$ for all 
$x\in X$. The \emph{fractional matching number} of $(X,\FF)$ is 
\begin{equation*}
\nu^\ast= \nu^{\ast}(\FF):=\sup\left\{ \sum_{F:F\in \FF} w(F) \st w \mbox{ is 
a 
fractional matching} \right\}.
\end{equation*}
By the duality of linear programming, $\nu^{\ast}=\tau^{\ast}$.

We will need the following simple observation.

\begin{lem}\label{lem:barnu}
Let $z>0$, and $\ell:\mathcal F\to \Zep$ be such that $v_{\ell}(x)\leq z$ 
for any $x\in X$. Then we have
\[
   \frac{v(\ell)}{z}\leq
(1+\lambda+\ldots+\lambda^{f-1})\nu^{\ast}(\FF)=\frac{1-\lambda^f}{1-\lambda}
\nu^{\ast}(\FF).
    \]
\end{lem}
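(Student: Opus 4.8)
The plan is to exhibit an explicit fractional matching $w:\FF\to[0,1]$ whose total weight $\sum_{F}w(F)$ is at least $\frac{1-\lambda}{1-\lambda^f}\cdot\frac{v(\ell)}{z}$; since $\nu^{\ast}(\FF)$ is defined as a supremum over all fractional matchings, this immediately yields the claimed inequality. The natural candidate is to weight each edge $F$ proportionally to its remaining payout: set
\[
w(F):=\frac{1}{z}\sum_{i=\ell(F)}^{f}\texp{i}.
\]
With this choice, $\sum_{F}w(F)=v(\ell)/z$ by the definition of $v(\ell)$, so it only remains to check two things: that $w$ is a legitimate fractional matching, and to relate $v(\ell)/z$ to $\nu^\ast$ through the normalizing factor $\frac{1-\lambda^f}{1-\lambda}$.

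For the matching constraint, fix a vertex $x\in X$ and estimate
\[
\sum_{F:x\in F\in\FF}w(F)=\frac{1}{z}\sum_{F:x\in F\in\FF}\sum_{i=\ell(F)}^{f}\texp{i}.
\]
The key observation is that $\sum_{i=\ell(F)}^{f}\texp{i}=\texp{\ell(F)}+\texp{\ell(F)+1}+\cdots\le \texp{\ell(F)}\cdot(1+\lambda+\lambda^2+\cdots)=\texp{\ell(F)}\cdot\frac{1}{1-\lambda}$, using $\texp{i}=\lambda^i$ for $i<f$ and $\texp{i}=0$ for $i\ge f$, together with $0<\lambda<1$. Hence
\[
\sum_{F:x\in F\in\FF}w(F)\le\frac{1}{z(1-\lambda)}\sum_{F:x\in F\in\FF}\texp{\ell(F)}=\frac{v_\ell(x)}{z(1-\lambda)}\le\frac{1}{1-\lambda},
\]
by the hypothesis $v_\ell(x)\le z$. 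This shows $(1-\lambda)w$ is a fractional matching. Therefore $\nu^\ast(\FF)\ge\sum_{F}(1-\lambda)w(F)=(1-\lambda)v(\ell)/z$, which rearranges to $v(\ell)/z\le\frac{1}{1-\lambda}\nu^\ast(\FF)$. This is weaker than the claimed bound by the factor $1-\lambda^f$ in the numerator, so the truncation must be exploited more carefully.

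To recover the sharp constant $\frac{1-\lambda^f}{1-\lambda}=1+\lambda+\cdots+\lambda^{f-1}$, I would instead bound the geometric tail exactly using the truncation: $\sum_{i=\ell(F)}^{f}\texp{i}=\lambda^{\ell(F)}+\cdots+\lambda^{f-1}=\lambda^{\ell(F)}(1+\lambda+\cdots+\lambda^{f-1-\ell(F)})\le\lambda^{\ell(F)}(1+\lambda+\cdots+\lambda^{f-1})=\texp{\ell(F)}\cdot\frac{1-\lambda^f}{1-\lambda}$ when $\ell(F)<f$, and the left side is $0$ when $\ell(F)\ge f$ (in which case $\texp{\ell(F)}=0$ too, so the inequality holds trivially). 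Thus in all cases $\sum_{i=\ell(F)}^{f}\texp{i}\le\frac{1-\lambda^f}{1-\lambda}\texp{\ell(F)}$. Summing over all $F\in\FF$ gives
\[
v(\ell)=\sum_{F\in\FF}\sum_{i=\ell(F)}^{f}\texp{i}\le\frac{1-\lambda^f}{1-\lambda}\sum_{F\in\FF}\texp{\ell(F)}.
\]
Now I need $\sum_{F\in\FF}\texp{\ell(F)}\le z\,\nu^\ast(\FF)$. For this, consider the function $w_0(F):=\texp{\ell(F)}/z$, which satisfies $\sum_{F:x\in F}w_0(F)=v_\ell(x)/z\le 1$ for every $x$, hence is a fractional matching; therefore $\sum_{F\in\FF}\texp{\ell(F)}/z\le\nu^\ast(\FF)$. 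Combining the two displays yields $v(\ell)/z\le\frac{1-\lambda^f}{1-\lambda}\nu^\ast(\FF)$, which is exactly the assertion.

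The only mild subtlety, and the step I would be most careful about, is the edge case $\ell(F)\ge f$: there $\texp{\ell(F)}=0$ by definition of the truncated exponential, but the inner sum $\sum_{i=\ell(F)}^{f}\texp{i}$ also vanishes since every index $i\ge\ell(F)\ge f$ has $\texp{i}=0$ (note the summation runs up to $f$, and $\texp{f}=0$ as well because $f\ge f$), so the term-by-term inequality is preserved and the edge contributes nothing on either side. Everything else is the elementary geometric-series estimate plus the linear-programming duality $\nu^\ast=\tau^\ast$ already recorded in the excerpt, so no real obstacle remains.
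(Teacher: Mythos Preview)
Your final argument is correct and is essentially the paper's proof: both hinge on the inequality $\sum_{i=\ell(F)}^{f}\texp{i}\le(1+\lambda+\cdots+\lambda^{f-1})\,\texp{\ell(F)}$ together with the hypothesis $v_\ell(x)\le z$ to produce a fractional matching. The only cosmetic difference is that the paper builds the normalization $1+\lambda+\cdots+\lambda^{f-1}$ directly into its fractional matching $w(F)=\frac{\sum_{i=\ell(F)}^f\texp{i}}{z(1+\lambda+\cdots+\lambda^{f-1})}$, whereas you use the simpler matching $w_0(F)=\texp{\ell(F)}/z$ and apply the geometric-tail bound separately; the content is identical (and your initial detour through the weaker $1/(1-\lambda)$ bound can be dropped).
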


\begin{proof}[Proof of Lemma~\ref{lem:barnu}]
Let
\begin{equation*}
 w(F):=\frac{\mathop\sum\limits_{i=\ell(F)}^{f} 
\texp{i}}{z(1+\lambda+\ldots+\lambda^{f-1})},\text{ for any } 
F\in\FF.
\end{equation*}
First, we show that $w$ is a fractional matching. Indeed, fix an $x\in X$.
\begin{equation*}
 \sum_{F:x\in F\in\FF}w(F)=
 \frac{1}{z} \sum_{F:x\in F\in\FF}
 \frac{\mathop\sum\limits_{i=\ell(F)}^{f} 
\texp{i}}{1+\lambda+\ldots+\lambda^{f-1}}
 \leq
\end{equation*}
\begin{equation*}
 \leq \frac{1}{z} \sum_{F:x\in F\in\FF} 
\texp{\ell(F)}=
 \frac{v_{\ell}(x)}{z}\leq 1.
\end{equation*}
Second, the total weight is
\begin{equation*}
\sum_{F:F\in\FF}w(F)=
\frac{1}{z(1+\lambda+\ldots+\lambda^{f-1})}\sum_{F:F\in\FF} 
{\mathop\sum\limits_{i=\ell(F)}^{f} 
\texp{i}}=
\frac{v(\ell)}{z(1+\lambda+\ldots+\lambda^{f-1})},
\end{equation*}
finishing the proof of the Lemma.
\end{proof}

\subsection{Finally, we count the steps of the algorithm.}
We may assume that $\lambda=p/q\in (0,1)$ with $p,q\in\Ze^+$. 
If $\lambda$ is irrational, then the statement of 
Theorem~\ref{thm:ftransversal} follows by continuity.
Clearly, $q^{f-1}$ is a common denominator for the pay outs at each step.

At the start, $k_0(F):=k(F)=0$ for all $F\in\FF$.
We group the steps according to the $\$$-amount (that is, $v_{k}(x)$) 
that we get at each.

In the first $t_1$ steps, each vertex $x$ that we pick has value 
$v_{k}(x)=\Delta=:z_1$, where, we recall, $\Delta$ is the maximum degree in the 
hypergraph. Let $k_1:\FF\to\Zep$ denote the current state after the 
first $t_1$ steps.

Then, in the second group of steps, we make $t_2$ steps, at each picking a 
vertex $x\in V$ of value $v_{k}(x)=\Delta-q^{1-f}=:z_2$, where $k$ changes at 
each step. Let $k_2:\FF\to\Zep$ denote the current state after the first 
$t_1+t_2$ steps.

In the $j$-th group of steps, we make $t_j$ 
steps, at each picking a vertex $x\in V$ of value 
$v_{k}(x)=\Delta-(j-1)q^{1-f}=:z_j$.
Let $k_j:\FF\to\Zep$ denote the current state after the 
first $t_1+\ldots+t_j$ steps.

Obviously, $t_j\geq 0$, moreover some $t_j$ may be zero. For instance (the 
reader may check as an exercise), if $f>1$, then $t_2=0$. For the last group, 
we have $j=q^{f-1}\Delta-p^{f-1}+1=:N$. 

Notice that $v_{k_j}(x)\leq z_{j+1}$ for any $x\in V$. 
Therefore, by Lemma~\ref{lem:barnu}, we have
\begin{equation} \label{eq:vj<=v*}
\frac{v(k_j)}{z_{j+1}}\leq
\frac{1-\lambda^f}{1-\lambda}\nu^{\ast}(\mathcal{F}).
\end{equation}
Clearly,
\begin{equation}\label{eq:vj=sumtj}
v(k_j)=\sum_{i=j+1}^{N} t_i z_i,\;\;\text{ for } 0\leq j 
\leq N-1.
\end{equation}
In total, we choose $t_1+t_2+\dots+t_N$ vertices (that is 
the cardinality of $A$ with multiplicity), and they form an $f$-fold 
transversal of $(X,\FF)$. Thus, by \eqref{eq:vj=sumtj} and \eqref{eq:vj<=v*}, 
we obtain
\begin{gather*}\tau_f\leq t_1+t_2+\dots+t_{N}=\\
=\left(\frac{v(k_{0})}{z_1}+\sum_{j=1}^{N-1} 
v(k_j)\left(\frac{1}{z_{j+1}}-\frac{1}{z_{j}}\right)\right) =
\frac{v(k_{0})}{z_1}+\sum_{j=1}^{N-1} 
\frac{v(k_j)q^{1-f}}{z_{j+1}z_{j}}\leq \\
\leq \frac{1-\lambda^f}{1-\lambda}\nu^{\ast}(\mathcal{F}) 
\left(1+\sum_{j=1}^{N-1}\frac{q^{1-f}}{z_j}\right) =
\frac{1-\lambda^f}{1-\lambda}\tau^{\ast}(\mathcal{F}) 
\left(1+\sum_{k=p^{f-1}+1}^{q^{f-1}\Delta}\frac{1}{k}\right)\leq \\
\leq\frac{1-\lambda^f}{1-\lambda}\tau^{\ast}(\mathcal{F})(1+\ln \Delta-(f-1)\ln 
\lambda),
\end{gather*}
which completes the proof of Theorem~\ref{thm:ftransversal}.

\subsection{Proof of 
Corollary~\ref{cor:onexistance_of_good_fractional_transvesal}}
An $f$-fold transversal $A\subset X$ ($A$ is a multiset) easily yields a 
fractional transversal: one sets the weight $w(x)=\frac{|\{x:x\in A \}|}{f}$ 
(cardinality counted with multiplicity) for any vertex $x\in X$. The total 
weight that we get from our construction in Theorem~\ref{thm:ftransversal} is 
then 
\[
\tau^\ast(\FF)\leq
\sum_{x:x\in V} w(x)\leq
\tau^\ast (\FF)\frac{1+\ln \Delta -(f-1)\ln 
\lambda}{f(1-\lambda)}\leq \tau^\ast(\FF)(1+\varepsilon).
\]

\subsection{Running time}\label{subseq:runtime}
Let $n$ denote the number of vertices and $m$ be the number of edges of the 
hypergraph. The adjacency matrix and $f$ are the inputs of the algorithm.
As preprocessing, for each vertex, we create a list of 
edges that contain it (at most $\Delta$), which takes $nm$ operations. We keep 
track of the current state in an array $k$ of length $m$.

At each step, the following operations are performed. 
Computing the value of a vertex takes the addition of at most $\Delta$ numbers.
Thus, finding the vertex of maximal value is $n\Delta$ operations.
Picking that vertex means decreasing at most $\Delta$ entries of the array $k$ 
by one. We make at most 
$\frac{1-\lambda^f}{1-\lambda}\tau^{\ast}(\mathcal{F})(1+\ln \Delta-(f-1)\ln 
\lambda)$ steps. 

With the $\lambda=0.287643$ substitution, in total, the number of operations is 
at most
\begin{equation*}
nm+
O(\tau^{\ast}\max\{\ln \Delta, f\}\cdot \Delta n)
\leq
O(\max\{\ln \Delta, f\}\cdot \Delta nm)
.
\end{equation*}

\section{Multiple covering of space -- Proof of 
Theorem~\ref{thm:multiplecovspace}}\label{sec:geometry}

We denote by $K\sim T:=\{x\in \Red\st T+x \subseteq K\}$ the \emph{Minkowski 
difference} of two sets $K$ and $T$ in $\Red$. For $K,L\subset\Red$, and $f\geq 
1$ integer, we denote the \emph{$f$-fold translative covering number} of $L$ by 
$K$, that is, the minimum number of translates of $K$ such that each point of 
$L$ is contained in at least $f$, by $N_f(L,K)$. We denote the \emph{fractional 
covering number} of $L$ by $K$ by $N^{\ast}(L,K):=\tau^\ast(\FF)$, where 
$\FF:=\{x-K\st x\in L\}$ is a hypergraph with base set $\Red$, see details in 
\cite{N15}, or \cite{AS15}.

\begin{thm}\label{thm:multiplecovgeneral}
Let $K$, $L$ and $T$ be bounded Borel measurable sets in 
$\Red$ and let $\Lambda\subset  \Red$ be a finite set with 
$L\subseteq \Lambda+T$. Then 
\begin{equation}
\label{eq:cvxIG}
N_f(L,K)\le \left\lceil \myconst N^{\ast}(L-T,K\thicksim 
T)\max\left\{\ln\left(\max_{x\in L-K}|(x+(K\sim T))\cap \Lambda 
|\right),f\right\} \right\rceil. 
\end{equation}
If $\Lambda\subset L$, then we have 
\begin{equation}
\label{eq:cvxIGspec}
N_f(L,K)\le \left\lceil \myconst N^{\ast}(L,K\thicksim 
T)\max\left\{\ln\left(\max_{x\in L-K}|(x+(K\sim T))\cap \Lambda 
|\right),f\right\} \right\rceil.
\end{equation}

\end{thm}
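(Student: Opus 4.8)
The plan is to reduce the geometric $f$-fold covering statement to Theorem~\ref{thm:ftransversal} by setting up the right hypergraph. First I would discretize the covering problem: the natural hypergraph has vertex set $\Lambda$ (or a suitable translate-lattice of potential centers) and, for each point of $L$, an edge consisting of those centers whose translate of $K$ covers that point. More precisely, following the $f=1$ argument in \cite{N15}, I would use the Minkowski difference trick: a translate $K+\lambda$ covers a point $p$ exactly when $\lambda \in p - K$, so covering $L$ at least $f$ times by translates of $K$ with centers in $\Lambda$ is the same as finding an $f$-fold transversal of the hypergraph $(\Lambda, \{(p-K)\cap\Lambda : p \in L\})$. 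The body $T$ enters because $L \subseteq \Lambda + T$ lets us replace "cover every point of $L$" by "cover every point of $L - T$ by translates of $K \sim T$", which is the standard thickening argument ensuring that covering the finite (or measure-theoretic) net forces covering all of $L$.

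Next I would identify the three hypergraph parameters appearing in \eqref{eq:ftransversalcor} with their geometric counterparts. The fractional transversal number $\tau^\ast$ of this hypergraph is, essentially by definition (as recorded in the paragraph defining $N^\ast$), at most $N^\ast(L-T, K\sim T)$; here one may need to pass between the continuous fractional covering (infimum over measurable weight functions on $\Red$) and the fractional transversal of the discrete hypergraph, which is handled exactly as in \cite{N15} by an averaging/translation-invariance argument. The maximum degree $\Delta$ of a vertex $\lambda \in \Lambda$ is the number of edges containing it, i.e. the number of points $x \in L-K$ (equivalently translates) with $\lambda \in x + (K\sim T)$, which is at most $\max_{x \in L-K}|(x + (K\sim T))\cap\Lambda|$ — this is precisely the quantity inside the logarithm in \eqref{eq:cvxIG}. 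Plugging these into \eqref{eq:ftransversalcor} gives $\tau_f \le \myconst\, N^\ast(L-T,K\sim T)\max\{\ln\Delta, f\}$, and since an $f$-fold transversal of this hypergraph is a multiset of centers whose translates of $K$ cover $L$ at least $f$ times, taking the ceiling yields $N_f(L,K)$ bounded by the right-hand side of \eqref{eq:cvxIG}. For \eqref{eq:cvxIGspec} one simply observes that when $\Lambda \subseteq L$ one can take $T = \{0\}$ (or argue directly), so $L - T = L$ and $K \sim T = K$.

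The step I expect to be the main obstacle is the careful handling of the continuous-to-discrete passage for the fractional covering number and the measure-theoretic thickening: one must ensure that covering the discrete structure built on $\Lambda$ actually produces a bona fide $f$-fold covering of all of $L$ (not just of $\Lambda + \{0\}$), and that the fractional transversal number of the discrete hypergraph is genuinely controlled by $N^\ast(L-T, K\sim T)$ rather than by some larger discrete quantity. Both points are resolved in \cite{N15} for $f=1$ and, as the paper remarks, the $f$-fold case goes through verbatim once Theorem~\ref{thm:ftransversal} is available in place of \eqref{eq:lovasz}; so I would cite that argument, indicate the substitution $\Delta \mapsto \max_{x\in L-K}|(x+(K\sim T))\cap\Lambda|$ and $\tau^\ast \mapsto N^\ast(L-T, K\sim T)$, and note that the only genuinely new input is the corollary \eqref{eq:ftransversalcor}.
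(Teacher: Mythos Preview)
Your overall strategy---reduce to Theorem~\ref{thm:ftransversal} via a suitable hypergraph and invoke \cite{N15} for the continuous-to-discrete passage---is exactly the paper's strategy. However, you have set the hypergraph up with the roles of vertices and edges reversed, and this causes both of your parameter identifications to fail. The paper takes the vertex set to be~$\Red$ (the potential translation centres) and the edges to be the sets $u-(K\sim T)$ with $u\in\Lambda$; an $f$-fold transversal is then a multiset of centres whose $K\sim T$-translates cover each $u\in\Lambda$ at least $f$ times, hence whose $K$-translates cover $\Lambda+T\supseteq L$. In that hypergraph the degree of a centre $x$ is $|\{u\in\Lambda:\ x\in u-(K\sim T)\}|=|(x+(K\sim T))\cap\Lambda|$, which is precisely the quantity inside the logarithm, and its fractional transversal number is $N^{\ast}(\Lambda\cap(L-T),K\sim T)\le N^{\ast}(L-T,K\sim T)$.

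By contrast, in your hypergraph the vertices are the points of $\Lambda$ and the edges are indexed by points to be covered. Then the degree of a vertex $\lambda\in\Lambda$ is a count over the edge-index set (your ``number of points $x\in L-K$ with $\lambda\in x+(K\sim T)$''), which is the volume-like quantity $|(L-K)\cap(\lambda-(K\sim T))|$ and is typically infinite; it is \emph{not} bounded by $\max_{x}|(x+(K\sim T))\cap\Lambda|$, which in your setup is the maximum edge \emph{size}, not the maximum degree. Likewise, restricting centres to lie in $\Lambda$ can only increase the fractional transversal number, so there is no reason your $\tau^{\ast}$ is controlled by $N^{\ast}(L-T,K\sim T)$, which allows centres anywhere in $\Red$. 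Two smaller points: the thickening argument goes the other way (covering $\Lambda$ by $K\sim T$ forces covering $\Lambda+T\supseteq L$ by $K$, not ``cover $L-T$ by $K\sim T$''); and your derivation of \eqref{eq:cvxIGspec} by setting $T=\{0\}$ would force $L\subseteq\Lambda$, not merely $\Lambda\subseteq L$---the correct observation is simply that $\Lambda\subseteq L$ gives $N^{\ast}(\Lambda,K\sim T)\le N^{\ast}(L,K\sim T)$ directly.
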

Theorem~\ref{thm:multiplecovgeneral} is the $f$-fold analogue 
of \cite{N15}*{Theorem~1.2}, where the case $f=1$ is considered.
For completeness, we give an outline the proof.

\begin{proof}[Proof of Theorem~\ref{thm:multiplecovgeneral}]
To prove \eqref{eq:cvxIG}, consider the hypergraph with base set $\Red$ and 
hyperedges of the form $u-(K\sim T)$, where $u\in\Lambda$. An 
$f$-fold transversal of this hypergraph clearly yields an $f$-fold covering of 
$L$ by translates of $K$. A substitution into \eqref{eq:ftransversalcor} yields 
the desired bound. We omit the proof of \eqref{eq:cvxIGspec}, which is very 
similar.
\end{proof}

Using this result, one may prove Theorem~\ref{thm:multiplecovspace} following 
\cite{N15}*{proof of Theorem~2.1}, which is the proof of Rogers' density bound 
\eqref{eq:rogers}. We give an outline of this proof.

\begin{proof}[Proof of Theorem~\ref{thm:multiplecovspace}]
Let $C$ denote the cube $C=[-a,a]^d$, where $a>0$ is large. 
Our goal is to cover $C$ by translates of $K$ economically.
We only consider the case when $K=-K$, as treating the general case would add 
only minor technicalities.

Let $\delta>0$ be fixed (to be chosen later) and let $\Lambda\subset\Red$ be a 
finite set such that $\Lambda+\frac{\delta}{2}K$ is a saturated (ie. maximal) 
packing of $\frac{\delta}{2}K$ in $C-\frac{\delta}{2}K$. By the 
maximality of the packing, we have that $\Lambda+\delta K\supseteq C$.
By considering volume, for any $x\in \Red$ we have
\begin{equation}\label{eq:lambdasmall}
 \card{{\Lambda\cap (x+(1-\delta)K)}}\leq 
 \frac{\vol{(1-\delta)K 
+\frac{\delta}{2}K}}{\vol{\frac{\delta}{2}K}}\leq
 \left(\frac{2}{\delta}\right)^d.
\end{equation}

Let $\varepsilon>0$ be fixed. Clearly, if $a$ is sufficiently large, then
\begin{equation}\label{eq:nstarobvious}
 N^\ast(C-\delta K,(1-\delta)K)\leq
 (1+\epsilon)\frac{\vol C}{(1-\delta)^d\vol K}.
\end{equation}

By \eqref{eq:cvxIG}, \eqref{eq:lambdasmall} and \eqref{eq:nstarobvious} we have
\begin{equation*}
 N_f(C,K)\leq 
 \left\lceil
  \myconst\frac{1+\epsilon}{(1-\delta)^d}
  \frac{\vol C}{\vol K}
  \max\left\{d\ln\left(\frac{2}{\delta}\right),f\right\} 
 \right\rceil.
\end{equation*}

Thus, we obtain an $f$-fold covering of $C$. We repeat this covering 
periodically for all translates of $C$ in a tiling of $\Red$ by translates of 
$C$, which yields an $f$-fold covering of $\Red$. The density of this covering 
is at most
\begin{equation*}
 N_f(C,K)\vol{K}/\vol{C}\leq
  \left\lceil
  \myconst\frac{1+\epsilon}{(1-\delta)^d}
  \max\left\{d\ln\left(\frac{2}{\delta}\right),f\right\} 
 \right\rceil.
\end{equation*}

We choose $\delta=\frac{2}{d\ln d}$, and a standard computation yields the 
desired result.
\end{proof}

\subsection*{Acknowledgement} We thank the referees, whose comments helped 
greatly to improve the presentation.

\bibliographystyle{amsalpha}
\bibliography{biblio}
\end{document}